\newtheorem{theorem}{Theorem}[section]
\newtheorem{prop}[theorem]{Proposition}
\newtheorem{maintheorem}[theorem]{Main Theorem}
\theoremstyle{definition}
\newtheorem{defi}[theorem]{Definition}
\newtheorem{examples}[theorem]{Examples}
\newtheorem{remark}[theorem]{Remark}
\newtheorem{comments}[theorem]{Comments}
\newcommand{\field}[1]{\mathbb{#1}}
\newcommand{\RR}{\field{R}}
\def\vect#1{\overrightarrow{#1}}
\def\d{\mathrm{d}}
\begin{document}

\keywords{Geometrical Optics, Malus-Dupin theorem, symplectic structures, Lagrangian submanifolds.}
\mathclass{Primary 53D05, secondary 53D12, 53B50, 7803.}

\abbrevauthors{C.-M. Marle}
\abbrevtitle{Hamilton and the Malus-Dupin theorem}

\title{The works of William Rowan Hamilton\\ 
       in Geometrical Optics\\
       and the Malus-Dupin theorem}

\author{Charles-Michel Marle}
\address{Universit\'e Pierre et Marie Curie\\
Paris, France\\
Personal address: 27 avenue du 11 novembre 1918, 92190 Meudon, France\\
E-mail: charles-michel.marle@math.cnrs.fr, cmm1934@orange.fr}

\maketitlebcp

\begin{abstract}
The works of William Rowan Hamilton in Geometrical Optics are presented,
with emphasis on the Malus-Dupin theorem. According to that theorem, a family of light rays depending on two
parameters can be focused to a single point by an optical instrument made of reflecting or refracting surfaces
if and only if, before entering the optical instrument, the family of rays is rectangular (\emph{i.e.}, admits 
orthogonal surfaces). Moreover, that theorem states that a rectangular system of rays remains rectangular
after an arbitrary number of reflections through, or refractions across, smooth surfaces of arbitrary shape.
The original proof of that theorem due to Hamilton is presented, along with another proof founded in symplectic geometry. It was the proof of that theorem which led Hamilton to introduce his 
\emph{characteristic function} in Optics, then 
in Dynamics under the name \emph{action integral}.
\end{abstract}

\section{Introduction}

It was a pleasure and a honour to present this work at the international meeting \lq\lq Geometry of Jets and Fields\rq\rq\ in honour of Professor Janusz Grabowski.
\par\smallskip

The works of Joseph Louis Lagrange (1736--1813) and Siméon Denis Poisson (1781--1840) during the years
1808--1810 on the slow variations of the orbital elements of planets in the solar system, are one of the main sources of contemporary Symplectic Geometry. Another source of Symplectic Geometry, equally important 
but maybe not so  well known, are the works on Optics due to
Pierre de Fermat (1601--1665),  Christian Huygens (1629--1695), \'Etienne-Louis Malus (1775--1812), 
Charles François Dupin (1784--1873) and William Rowan Hamilton (1805--1865). 
I will deal mainly in what follows with the Malus-Dupin theorem in Geometrical Optics, according to which
a family of light rays smoothly depending on two parameters which, when entering an optical system, has a property called \emph{rectangularity}\footnote{It is the terminology used by Hamilton.}, keeps that property
in each transparent medium in which it propagates. The optical system may be made of any number of homogeneous and isotropic transparent media of various refractive indices separated by smooth surfaces and of reflecting smooth surfaces of arbitrary shapes.
\par\smallskip

In view of explaining what is \emph{rectangularity} of a family of light rays, let me recall a few basic concepts of Geometrical Optics.
\par\smallskip

In Geometrical Optics, luminous phenomena are described in terms of \emph{light rays}.
In classical (non-relativistic) Physics, the physical Space in which we live and in which light propagates
is mathematically described as an affine three-dimensional space $\mathcal E$ endowed, once a unit of length  has been chosen, with an Euclidean structure. The set of all oriented straight lines in $\mathcal E$ will be denoted by $\mathcal L$. In an homogeneous and isotropic transparent medium, a \emph{light ray} is a connected component of the part of an oriented straight line contained in that medium, hence a segment of an element of
$\mathcal L$. For dealing with reflections and refractions, it will be convenient to consider the full oriented straight line which bears that segment. A reflection on a mirror mathematically described as a smooth surface, or a refraction acroos a smooth surface which separates two transparent media with different refractive indices, are therefore described as \emph{transformations} of the space $\mathcal L$,
\emph{i.e.} as maps defined on an open subset of $\mathcal L$, wich associates to each light ray which hits the reflecting or refracting surface the correponding reflected or refracted light ray.
\par\smallskip

It will be proven below that $\mathcal L$ can be very naturally endowed with a four-dimensional smooth manifold structure.

\begin{defi}\label{FamilleRayons}
A \emph{family of light rays smoothly depending on $n$ parameters} ($1\leq n\leq 4$) is an immersed
(not necessarily embedded) submanifold $\mathcal F$ of dimension $n$ of  $\mathcal L$. In short, it will be called \emph{an $n$-parameters family of rays}, the smoothness being tacitly assumed.
\end{defi}

\begin{defi}\label{PointRegulier}
A \emph{regular point} of a light ray $L_0$ in a two-parameters family $\mathcal F$ of rays is a point 
$m_0\in L_0$ with the following property: for any smooth surface $S\subset \mathcal E$ containing $m_0$  
and transverse at that point to the ray $R_0$, there exists an open neighbourhood $U$ of $L_0$ in 
$\mathcal F$ and an open neighbourhood $V$ of $m_0$ 
in $S$, such that each ray $L\in U$ meets $V$ at a unique point $m$ and that the map 
$L\mapsto m$ is a diffeomorphism of $U$ onto $V$.  
\end{defi}

\begin{defi}\label{NormalFamily} 
A two-parameters family $\mathcal F$ of rays is said to be \emph{rectangular} when for each regular point 
of each ray $L\in{\mathcal F}$, there exists  a smooth surface orthogonally crossed by $L$
and by all the rays in a neighbourhood of $L$ in $\mathcal F$.
\end{defi}

I can now indicate a mathematically precise statement of the Malus-Dupin theorem.

\begin{maintheorem}[Malus-Dupin theorem]\label{TheoremeMalusDupin}
A rectangular family of light rays which enters into an optical system with any number of
smooth reflecting of refracting surfaces remains rectangular in each homogeneous and isotropic
transparent medium in which it propagates.
\end{maintheorem}

\begin{comments}\label{Commentaires}
Regularity of a point on a ray in a two-parameters family, and rectangularity of a two-parameters family of rays, are local properties. When a point $m_0$ on a ray $R_0$ in a two-parameters family of rays $\mathcal F$
is regular, there exists an open neighbourhood $U$ of $R_0$ in $\mathcal F$ and an open neighbourhood $W$
of $m_0$ in $\mathcal E$ such that each ray in $U$ meets $W$ and that each point in $W$ belongs to a unique ray in $U$ and is regular on that ray. By taking, for each point $m\in W$, the plane through $m$ orthogonal
to the ray $R\in U$ which bears that point, one obtains a rank 2 distribution on $W$. The rectangularity 
of the family of rays $\mathcal F$ means that all the distributions so obtained are integrable
in the sense of \cite{Sternberg} definition 5.2 p. 130.
\par\smallskip

Very often, a two-parameter family of rays is such that in some parts of the physical space 
$\mathcal E$ several sheets of that family of rays are superposed. The set of non-regular points 
on rays of such a family constitute the \emph{caustic surfaces} of the family. These surfaces were studied by Hamilon as soon as 1824, when he was 19 years old \cite{Hamilton1824}\footnote{Hamilton assumes implicitely
that on each ray of a two-parameter family there exists regular points. I will not give a proof of 
this property, nor will I study the caustic surfaces. Readers interested in these advanced topics related to the theory of singularities are referred to
\cite{Arnold}, chapter 9, section 46, pages 248--258 and Appendix 11, pages 438--439. They will find
in that book and in \cite{GuilleminSternberg1984} (Introduction, pp. 1--150) 
applications of symplectic Geometry in Geometrical Optics much more advanced than those discussed here.}.
\end{comments}

\begin{examples}\label{ExemplesFamillesRectangulaires}
The family of rays emitted by a luminous point in an homogeneous  and isotropic transparent medium
is rectangular, since all the spheres centered on the luminous point are orthogonal to all rays. On each ray, any point other than the luminous point is regular.
\par\smallskip

Similarly, the family of rays emitted in an homogeneous and isotropic transparent medium by a 
smooth luminous surface, when each point of that surface emits only one ray in a direction orthogonal 
to the surface, is rectangular: it is indeed a well known geometric promerty of the family of 
straight lines orthogonal to a smooth surface. 
\par\smallskip

In the three-dimensional affine Euclidean space $\mathcal E$, let $D_1$ and $D_2$ be two straight lines, orthogonal to each other, which have no common point. Let $\mathcal F$ be the two-parameters family of straight lines which meet both $D_1$ and $D_2$, oriented from $D_1$ to $D_2$. Frobenius' theorem
(\cite{Sternberg} theorem 5.2 p. 134) proves that $\mathcal F$
is not rectangular.
\end{examples}

After a short presentation of the historical background of the Malus-Dupin theorem, I will explain
its original proof due to Hamilton. Then I will present another proof founded in Symplectic Geometry.
 
\section{Historical background}\label{history}

\'Etienne Louis Malus de Mitry (1775--1812) 
was a soldier in the French army, a mathematician and a physicist. He studied the properties of families of oriented straight lines in view of applications in Optics.
Moreover, he developed the undulatory theory of light due to Christian Huygens (1629--1695), 
discovered and studied the phenomena of \emph{light polarization} and of
\emph{birefringence} which occurs when light propagates in some crystals.
He was engaged in the disastrous military campaign launched by Napoléon in Egypt (1798--1801). In Egypt he fell ill of a terrible disease, the plague, and was miraculously cured.
In 1811 he became Director of Studies at the French \'Ecole Polytechnique. Weakened by the diseases caught in Egypt, he died of tuberculosis in 1812. During the Egypt campaign he kept a journal which was published   
eighty years after his death \cite{Malus1892}.
\par\smallskip

Malus proved \cite{Malus1808} that the family of rays emitted by a luminous point (which, as seen above,
is rectangular) still is rectangular after \emph{one reflection} on a smooth mirror or
\emph{one refraction} across a smooth surface. But he was in doubt whether this property is still
satisfied for \emph{several} successive reflections or refractions \cite{Malus1811}
\footnote{This is a very nice example of application of the famous Arnold's theorem: when a theorem or a mathematical object is named by a person's name, that person \emph{is not} the person who proved that theorem
or who created that mathematical object. V.~Arnold used to add: \emph{of course, my theorem applies to itself!}}.
Malus' works on families of oriented straight lines were later used and much extended by Hamilton
\cite{Hamilton1824, Hamilton1827, Hamilton1830, Hamilton1831, Hamilton1837}.
\par\smallskip

Charles François Dupin (1784--1873) was a French naval engineer and mathematician. His name is linked to
several mathematical objects: \emph{Dupin's cyclids}, remarkable surfaces he discovered when he
was a youg student of Gaspard Monge (1746--1818) at the French \'Ecole Polytechnique; 
\emph{Dupin's indicatrix} which describes the shape of a smooth surface near one of its point.
I think that Arnold's theorem stated in the footnote below does not apply to these objects, which are indeed due to Dupin. He spent several years in Corfu (Greece) where he renovated the naval dockyard, while
participating in the creation, then in the works of the Ionian Academy. He became Professor
at the French \emph{Conservatoire des Arts et Métiers}, where he lectured and wrote several books 
for the education of working classes. He had very modern ideas: he thought that young girls should receive as good an education as youg boys and believed that a general increase of the education level would have beneficial effects on the whole society. Unfortunately these generous ideas are still not everywhere 
in application in today's world. He was exceptionnally shrewd: according to Wikipedia \cite{Dupin},
he inspired the poet and novelist \emph{Edgar Allan Poe} 
(1809--1849) the character of \emph{Auguste Dupin} appearing in the three 
detective stories \emph{The murders in the rue Morgue}, 
\emph{The Mystery of Marie Roget} and \emph{The Purloined Letter}. He found a very
neat geometric proof of the Malus-Dupin theorem for reflections \cite{Dupin1816} and he knew that the same result was true for refractions but did not publish his proof. 
\par\smallskip

According to \cite{ConwaySynge}, Adolphe Quetelet (1796--1874) and Joseph Diaz Gergonne (1771--1859)
obtained in 1825 a proof of the Malus-Dupin theorem both for reflections and for refractions. A little later,
the great Irish mathematician William Rowan Hamilton
(1805--1865) independently obtained a complete proof of that theorem \cite{Hamilton1827}. He knew the
previous works of Malus on the subject and quoted them in his own works, but it seems that he did not knew
the works of Dupin, Quetelet and Gergonne. Maybe this explains why that theorem, called in French textbooks on Optics the \emph{Malus-Dupin theorem} \cite{Courty}, is 
generally called \emph{Malus' theorem} in other countries.

\section{Hamilton's proof of the Malus-Dupin theorem}
I present in this section Hamilton's proof of the Malus-Dupin theorem 
(\cite{Hamilton1827}), first for a reflection, then for a refraction. While scrupulously following
Hamilton's ideas, I use today's vector notations in use in mathematics and physics.
Moreover I use figures to illustrate Hamilton's reasoning, although there are none 
in his publications~\footnote{According to the editors of Hamilton's mathematical works, the
total lack of figures in his published works could be due to Lagrange's influence. Indeed 
Lagrange proudly writes in the preface of his famous book ~\cite{Lagrange5}: \lq\lq On ne trouvera point de
figure dans cet Ouvrage. Les méthodes que j'y expose ne demandent ni constructions, ni raisonnements géométriques ou méchaniques,
mais seulement des opérations algébriques, assujetties à une marche régulière et uniforme. Ceux qui aiment l'Analyse,
verront avec plaisir la Méchanique en devenir une nouvelle branche, et me sauront gré d'en avoir étendu ainsi le domaine\rq\rq.}.

\subsection{A Reflection}
Hamilton considers a two-parameters family $\mathcal F$ of rays reflected by a smooth surface $M$. He assumes that each ray in $\mathcal F$ meets $M$ transversally. His arguments are local since they apply to a small neighbourhood of each ray in $\mathcal F$. He proves successively three results. First, he proves that
if the reflected family of rays is focused to a single point, the incident family $\mathcal F$ is 
rectangular. Then he proves that if the incident family $\mathcal F$ is rectangular, for each ray in 
$\mathcal F$ one can choose the position and the shape of a smooth mirror in such a way that
by reflection that ray and all rays in a a small neighbourhood of it are focused to a single point. Moreover
the point at which the reflected rays  are focused can be almost freely
chosen, with very few restrictions. These two results are in a way  more precise that the Malus-Dupin theorem,
since they precisely indicate the nature of the reflected family. Finally, Hamilton proves the Malus-Dupin
theorem itself. 
\par\smallskip

\subsubsection{Mathematical formulae for reflections}\label{ExpressionLoisReflexion}
Let me first prove some formulae which follow from the laws of reflection. Let $k=(k_1,k_2)$
be local coordinates in the family of rays $\mathcal F$ defined on an open neighbourhood 
$V$ of $(0,0)$ in $\RR^2$.  
For each $k\in V$, the ray in $\mathcal F$ with coordinates $k$ will be denoted by $L_1(k)$.
Let $\vect{\mathstrut  u_1(k)}$ be its unitary directing vector, $P(k)$ be the point at which 
this ray hits the mirror $M$, 
$\vect{\mathstrut n(k)}$ be the unitary vector orthogonal to $M$ at $P(k)$ directed towards 
the reflecting side of that mirror,
$L_2(k)$ be the corresponding reflected ray and $\vect{\mathstrut u_2(k)}$ its unitary directing vector.
The maps which associate $P(k)$, $\vect{\mathstrut u_1(k)}$ and $\vect{\mathstrut u_2(k)}$
to each $k\in V$ of course are smooth. Let $M_1(k)$ be a point on $L_1$ and $M_2(k)$ be a point
on $L_2(k)$ (figure 1). Their choice is up to now relatively free, it is only assumed that the maps
$k\mapsto M_1(k)$ and $k\mapsto M_2(k)$ are smooth. 
Let $O$ be a fixed point in the physical space $\mathcal E$ taken as origin. In short, 
$\vect{\mathstrut M_1(k)}$, 
$\vect{\mathstrut M_2(k)}$ and $\vect{\mathstrut P(k)}$ stand for the vectors
$\vect{\mathstrut O\,M_1(k)}$, $\vect{\mathstrut O\,M_2(k)}$ and $\vect{\mathstrut O\,P(k)}$.
We have

\begin{figure}[htp]\label{figure1}
\begin{center}
  \includegraphics{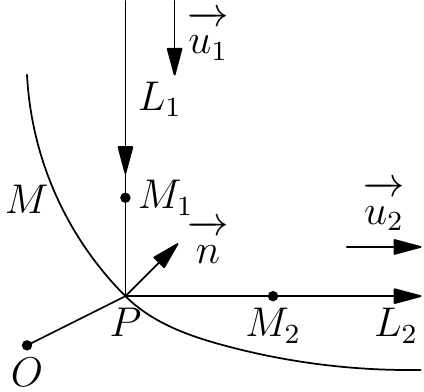}\\
  \caption{Reflection}
  \end{center}
\end{figure}

 $$\vect{\mathstrut M_1(k)\,P(k)}=\overline{\mathstrut  M_1(k)\,P(k)}\,\vect{\mathstrut u_1(k)}\,,\quad
     \vect{\mathstrut P(k)\,M_2(k)}=\overline{\mathstrut P(k)\,M_2(k)}\,\vect{\mathstrut u_2(k)}\,,
 $$
where $\overline{\mathstrut M_1(k)\,P(k)}$ and $\overline{\mathstrut P(k)\,M_2(k)}$ are the algebraic
values of the vectors 
$\vect{\mathstrut M_1(k)\,P(k)}$
and $\vect{\mathstrut P(k)\,M_2(k)}$, \emph{i.e} the lengths of the straight line segments 
$\bigl(M_1(k),P(k)\bigr)$ and $\bigl(P(k),M_2(k)\bigr)$ with the sign 
$+$ if the light propagates from $M_1(k)$ towards $P(k)$, or from $P(k)$ towards $M_2(k)$ 
(as on figure 1)
and $-$ in the reverse instance. The points $M_1(k)$ and $M_2(k)$ can indeed be chosen
behind the mirror $M$ on the straight lines which support the light rays $L_1(k)$ and $L_2(k)$.
By differentiating these equalities, we obtain
 \begin{align*}
 \d\vect{\mathstrut P(k)}-\d\vect{\mathstrut M_1(k)}&=\d\overline{\mathstrut  M_1(k)\,P(k)}\,\vect{\mathstrut u_1(k)}
                                                                                     +\overline{\mathstrut  M_1(k)\,P(k)}\,\d\vect{\mathstrut u_1(k)}\,,\\
\d\vect{\mathstrut M_2(k)}-\d\vect{\mathstrut P(k)}&=\d\overline{\mathstrut P(k)\,M_2(k)}\,\vect{\mathstrut u_2(k)}
                                                                                      +\overline{\mathstrut P(k)\,M_2(k)}\,\d\vect{\mathstrut u_2(k)}\,.
 \end{align*}
Let us take the scalar product with $\vect{\mathstrut u_1(k)}$ 
(respectively, with $\vect{\mathstrut u_2(k)}$) of both sides of the first (respectively second)
equality, and let us add the two equalities so obtained. 
Since the vectors $\vect{\mathstrut u_1(k)}$ and  $\vect{\mathstrut u_2(k)}$,
are unitary,  $\vect{\mathstrut u_1(k)}\cdot \d\vect{\mathstrut u_1(k)}=0$ and
$\vect{\mathstrut u_2(k)}\cdot \d\vect{\mathstrut u_2(k)}=0$ Therefore
 \begin{equation}
 \begin{split}
 \bigl(\vect{\mathstrut u_1(k)}-\vect{\mathstrut u_2(k)}\bigr)\cdot\d\vect{\mathstrut P(k)}&= 
 \vect{\mathstrut u_1(k)}\cdot\d\vect{\mathstrut M_1(k)}-\vect{\mathstrut u_2(k)}\cdot\d\vect{\mathstrut M_2(k)}\\
 &\quad+\d\bigl(\overline{\mathstrut  M_1(k)\,P(k)}+\overline{\mathstrut P(k)\,M_2(k)}\bigr)\,, 
 \end{split}
 \tag{*}
 \end{equation}
where the dot $\cdot$ stands for the scalar product of vectors.  
The laws of reflection show that the vectors $\vect{\mathstrut u_1(k)}-\vect{\mathstrut u_2(k)}$
and $\vect{\mathstrut n(k)}$ are parallel. Moreover any infinitesimal variation $\d\vect{\mathstrut P(k)}$ 
of the vector $\vect{\mathstrut P(k)}$
is tangent to the surface $M$ at $P(k)$, therefore is orthogonal to $\vect{\mathstrut n(k)}$. So
 $$\Bigl(\vect{\mathstrut u_1(k)}-\vect{\mathstrut u_2(k)}\Bigr)\cdot\d\vect{\mathstrut P(k)}=0\,.\eqno(**)$$
It follows from the above equalities $(*)$ $(**)$
 $$\vect{\mathstrut u_2(k)}\cdot\d\vect{\mathstrut M_2(k)}-
    \vect{\mathstrut u_1(k)}\cdot\d\vect{\mathstrut M_1(k)}
      =\d\Bigl(\overline{\mathstrut  M_1(k)\,P(k)}+\overline{\mathstrut P(k)\,M_2(k)}\Bigr)\,.\eqno({*}{*}{*})
 $$

\subsubsection{First result proven by Hamilton}\label{PremierResultatHamiltonReflexion}  
Hamilton assumes that the reflection on the mirror $M$ concentrates all the reflected rays 
onto a single point $M_2$, which can be real (before the mirror) or virtual (behind the mirror).
He chooses all the points $M_2(k)$ coincident with $M_2$. Therefore $\d\vect{\mathstrut M_2(k)}=0$, 
since $M_2(k)=M_2$ does not depend on $k$, so equality $({*}{*}{*})$ above becomes
 $$-\vect{\mathstrut u_1(k)}\cdot\d\vect{\mathstrut M_1(k)}
      =\d\Bigl(\overline{\mathstrut  M_1(k)\,P(k)}+\overline{\mathstrut P(k)\,M_2}\Bigr)\,.
 $$  
Hamilton chooses the point $M_1(k)$ on each incoming light ray $L_1(k)$ in such a way that
 $$\overline{\mathstrut  M_1(k)\,P(k)}+\overline{\mathstrut P(k)\,M_2}=\hbox{Constant}\,.
 $$
Then
 $$\vect{\mathstrut u_1(k)}\cdot\d\vect{\mathstrut M_1(k)}=0\,,
 $$
which proves that any infinitesimal variation of $M_1(k)$ is orthogonal to $\vect{\mathstrut u_1(k)}$. 
If the point $M_1(0,0)$ is regular on the ray $L_1(0,0)$, the points $M_1(k)$ draw  a small
smooth surface when $k$ varies around $(0,0)$, which is orthogonally crossed by the rays $L_1(k)$.
This proves that the family of rays $\mathcal F$ is rectangular in a neighbourhood of $L_1(0,0)$.

\subsubsection{Second result proven by Hamilton}\label{SecondResultatHamiltonReflexion}
Hamilton now assumes that the family of rays $\mathcal F$ is rectangular. He tacitly assumes that
there exists a regular point on the ray $L_1(0,0)$, and he takes that point for $M_1(0,0)$. 
The definition of rectangularity
\ref{NormalFamily} shows that there exists a small smooth open surface
$\Sigma$ containing $M_1(0,0)$ orthogonally crossed by the rays
$L_1(k)$ for all $k$ in some open neighbourhood $V'$ of $(0,0)$, $V'\subset V$. For each $k\in V'$
Hamilton chooses for $M_1(k)$ the point at which $L_1(k)$ crosses $\Sigma$.
\par\smallskip

Let $P(0,0)$ be any regular point on the ray $L_1(0,0)$. Of course it is possible to take
$P(0,0)=M_1(0,0)$ but any other regular point can be chosen. That point will be the point at which
$L_1(0,0)$ hits the mirror which will be constructed.
The comments \ref{Commentaires} prove that there exists an open neighbourhood $W$ of $P(0,0)$ in $\mathcal E$ 
whose all points are regular on the rays in $V'$ which cross them. By restricting eventually
$V'$ and $W$ it is possible to arrange things so that each  ray in $V'$ meets $W$ and that each 
point in $W$ is crossed by a unique ray in
$V'$. The map $X\mapsto k(X)$, which associates to each point $X\in W$ the coordinates
$k=(k_1,k_2)$ of the unique ray in $V'$ which crosses $X$, is smooth. Hamilton builds in $W$ 
the reflecting surface which concentrates the reflected rays to a single point.
\par\smallskip

Let $M_2$ be a point in the physical space $\mathcal E$ which will be the point on which
the reflected rays are focused. The only restriction on the choice of that point is that it must be 
other than $P(0,0)$. The two functions defined on $W$
 $$X\mapsto F_{\varepsilon}(X)=\overline{\mathstrut  M_1\bigl(k(X)\bigr)\,X}+\varepsilon\Vert \vect{\mathstrut X\,M_2}\Vert\,,
        \quad X\in W\,,\quad\hbox{with}\ \varepsilon=\pm 1 
 $$
are smooth and their first differentials at $P(0,0)$ vanish.
Consider the two subsets
 $$M_\varepsilon=\Bigl\{X\in W; F_\varepsilon(X)=F_\varepsilon\bigl(P(0,0)\bigr)\Bigr\}\,,\quad 
\varepsilon=\ \hbox{either} +\ \hbox{or}\ - \,.
 $$
By eventually restricting again $W$ and $V'$, things can be arranged so that the two subsets
$M_\varepsilon$ are two small open surfaces containing the point $P(0,0)$, transversally
met by all rays in $V'$.
If $M_2$ is situated on the ray $L_1(0,0)$ \emph{before} $P(0,0)$ (with respect to the orientation
of the oriented straight line $L_1(0,0)$), it necessarily will be a \emph{real} convergence point
of the reflected rays and one has to choose 
$\varepsilon=+$. If $M_2$ is on the ray $L_1(0,0)$ 
\emph{after} $P(0,0)$, it necessarily will be a \emph{virtual} convergence point of the reflected rays and one has to choose $\varepsilon=-$. 
For all other possible choices of $M_2$, the following calculations prove that both
$\varepsilon=+$ and $\varepsilon =-$ can be chosen. By taking the surface $M_\varepsilon$ as a
mirror, for any point
$P\in M_\varepsilon$,
 $$\varepsilon\Vert \vect{\mathstrut P\,M_2}\Vert=\overline{\mathstrut P\,M_2}$$
and for any infinitesimal variation of $P$ on $M_\varepsilon$,
 $$\d\Bigl(\overline{\mathstrut  M_1\bigl(k(P)\bigr)\,P}+\overline{\mathstrut P\,M_2}\Bigr)=0\,,\quad  
     \vect{\mathstrut u_1\bigl(k(P)\bigr)}\cdot\d\vect{\mathstrut M_1\bigl(k(P)\bigr)}=0\,.
 $$
Since $M_2\bigl(k(P)\bigr)=M_2$ does not depend on $P$, the equality $(*)$ above shows that 
 $$\Bigl(\vect{\mathstrut u_1\bigl(k(P)\bigr)}-\vect{\mathstrut u_2\bigl(k(P)\bigr)}\Bigr)\cdot\d\vect{\mathstrut P}=0\,.
 $$
This proves that for each $P\in M_\varepsilon$, reflection on the mirror $M_\varepsilon$ 
transforms the oriented straight $L_1\in{\mathcal F}$ which meets $M_\varepsilon$ at $P$
into the oriented straight line $L_2$ through the point $P$ with $\vect{\mathstrut u_2\bigl(k(P)\bigr)}$
as unitary directing vector.
\par\smallskip

\subsubsection{Proof of the Malus-Dupin theorem for a reflection}\label{MalusDupinHamiltonReflexion}
Hamilton finally proves the Malus-Dupin theorem fo a reflection: if a rectangular 
two-parameters family of rays hits transversally a smooth reflecting surface of any shape, the corresponding family of reflected rays is rectangular. For proving this result, he chooses a regular point $M_1(0,0)$ on the ray $L_1(0,0)$ and a smooth surface containing this point crossed orthogonally by the rays $L_1(k)$
for all $k$ near enough $(0,0)$. As point $M_1(k)$ on the incoming ray $L_1(k)$ he chooses the point at 
which that ray crosses orthogonally that surface, and on the corresponding reflected ray $L_2(k)$ he chooses
$M_2(k)$ so that 
 $$\overline{\mathstrut  M_1(k)\,P(k)}+\overline{\mathstrut P(k)\,M_2(k)}=\hbox{Constant}\,.
 $$
The above equality ${*}{*}{*}$ proves that
 $$\vect{\mathstrut u_2(k)}\cdot\d\vect{\mathstrut M_2(k)}=0\,.
 $$
If $M_2(0,0)$ is a regular point on the ray $L_2(0,0)$, the small displacements of $M_2(k)$ 
when $k$ varies  in a neighbourhood of $(0,0)$ draw a smooth surface orthogonally crossed 
by the rays $L_2(k)$. The family of reflected rays therefore is rectangular.

\subsection{A refraction}\label{HamiltonRefraction}
Hamilton considers a two-parameters family of rays $\mathcal F$ refracted across a smooth surface $R$. 
He assumes that at the point at which an incoming ray reaches the refracting surface $R$, that ray and the
corresponding refracted ray are transverse to that surface. He proves successively three results which correspond to those previously proven for a reflection, his proofs resting on the following
equalities.

\subsubsection{Mathematical formulae for a refraction}\label{ExpressionLoisRefraction}
The notations are the same as those in Section \ref{ExpressionLoisReflexion}:
$\mathcal F$ is a two-parameters family of rays refracted across a smooth refracting surface
$R$. Local coordinates on $\mathcal F$, denoted by $k=(k_1,k_2)$, take their values in a neighbourhood
$V$ of $(0,0)$ in $\RR^2$. For each $k\in V$, $L_1(k)\in{\mathcal F}$ is the ray of coordinates $k$,
$L_2(k)$ the corresponding refracted ray, $\vect{\mathstrut  u_1(k)}$ and
$\vect{\mathstrut  u_2(k)}$ their respective unitary directing vectors, $P(k)$ the point at which $L_1(k)$
meets the surface $R$, $\vect{\mathstrut n(k)}$ a unitary vector orthogonal to $R$ at $P(k)$, directed for example towards the side containing the ray $L_1(k)$. Let
$M_1(k)$ be a point on $L_1(k)$ and
$M_2(k)$ be a point on $L_2(k)$ chosen so that the maps $k\mapsto M_1(k)$
an $k\mapsto M_2(k)$ are smooth (Figure 2).

\begin{figure}[htp]\label{figure2}
\begin{center}
  \includegraphics{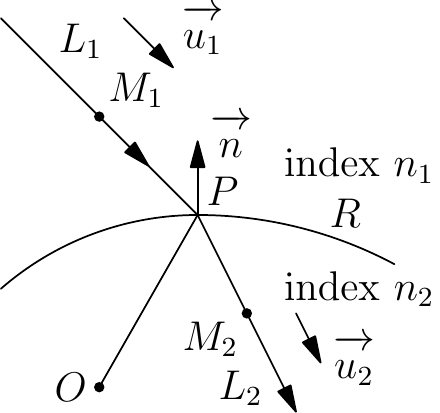}\\
  \caption{Refraction}
  \end{center}
\end{figure}  

Let $O\in{\mathcal E}$ be a fixed point taken as origin.
As before $\vect{\mathstrut M_1(k)}$, 
$\vect{\mathstrut M_2(k)}$ and $\vect{\mathstrut P(k)}$ stand for the vectors 
$\vect{\mathstrut O\,M_1(k)}$, 
$\vect{O\,M_2(k)}$ and $\vect{O\,P(k)}$. Let $n_1$ and $n_2$ be the refractive indices
of the two transparent media separated by the surface $R$. Arguments similar to those used
in section \ref{ExpressionLoisReflexion} for a reflection
easily lead to the following equality, which corresponds to equality $(*)$
of that section.
  \begin{equation*}
 \begin{split}
 \bigl(n_1\vect{\mathstrut u_1(k)}-n_2\vect{\mathstrut u_2(k)}\bigr)\cdot\d\vect{\mathstrut P(k)}&= 
 n_1\vect{\mathstrut u_1(k)}\cdot\d\vect{\mathstrut M_1(k)}-n_2\vect{\mathstrut u_2(k)}\cdot\d\vect{\mathstrut M_2(k)}\\
 &\quad+\d\bigl(n_1\overline{\mathstrut  M_1(k)\,P(k)}+n_2\overline{\mathstrut P(k)\,M_2(k)}\bigr)\,.
 \end{split}
 \end{equation*}

Equality $(**)$ obtained for a reflection must be replaced by
 $$\bigl(n_1\vect{\mathstrut u_1(k)}-n_2\vect{\mathstrut u_2(k)}\bigr)\cdot\d\vect{\mathstrut P(k)}=0\,,
 $$
which expresses Snell-Descartes' laws of refraction.
\par\smallskip
Using these two equalities, Hamilton briefly sketches the proof of the three following results,
which correspond to those he already obtained for a reflection. 

\subsubsection{First result proven by Hamilton} 
If a two-parameters family of rays is concentrated to a single point by refraction across a
smooth surface which separates two transparent media with different refractive indices,
before reaching that surface the family of ongoing rays is rectangular.

\subsubsection{Second result proven by Hamilton} 
Conversely, if a two-parameters family of rays contained in a transparent medium with refractive index
$n_1$ is rectangular, for any ray in this family one can choose the position and the shape
of a smooth refracting surface separating that transparent medium from another transparent medium
with refractive index $n_2\neq n_1$ so that after refraction all the
rays contained in some neighbourhood of that ray are concentrated to a single point.

\subsubsection{Proof of the Malus-Dupin theorem for refraction} 
When a rectangular family of rays contained in a transparent medium with refractive index $n_1$
is refracted, through a smooth surface, into another transparent medium of refractive index
$n_2\neq n_1$, the family of refracted rays is rectangular.

\subsection{The characteristic function} After proving the Malus-Dupin theorem 
for a reflection and for a refraction, Hamilton observes that this theorem remains valid
for an optical device with any number of smooth reflecting or refracting surfaces:
when a rectangular family of rays enters that optical device, the outgoing family of rays is 
also rectangular.
\par\smallskip

Hamilton introduces the notion of \emph{characteristic function} of an optical device. He successively
provides several, more and more general definitions of that notion 
\cite{Hamilton1827, Hamilton1830, Hamilton1831, Hamilton1837}. In its most general definition,
it is a function, $(M_1,M_2)\mapsto V(M_1,M_2)$ which depends of two points $M_1$
and $M_2$ taken in the part of physical space occupied by the device, equal to the 
\emph{optical length} of a light ray joining these two points, travelling into the
optical device and obeying the laws of reflection and of refraction at each encounter of a
reflecting or refracting smooth surface. For a device containing only reflecting surfaces, it is 
simply the sum of the lengths of all straight line segments constituting the light ray joining
$M_1$ and $M_2$. When the device contains refractive surfaces, it is the sum of the products
of the length of each straight line segment by the refracive index of the transparent medium 
which contains that segment. In \cite{Hamilton1837} Hamilton even considers optical devices 
made by a continuous transparent media whose refractive index may depend on the considered 
point in space, on the direction of the light ray and on a \emph{chromatic index} to allow 
the treatment of non-monochromatic light. The value $V(M_1,M_2)$ is then expressed by
an \emph{action integral} taken on the path of the light ray joining $M_1$ and $M_2$.
Applying this method to the propagation of light in birefringent crystals, Hamilton discovers 
the very remarkable phenomenon of \emph{conicar refraction}, whose existence was confirmed in 1833
by experiments, using an aragonite crystal, he suggested to his colleague Humphrey Lloyd 
of the  \emph{Trinity College} in Dublin.
\par\smallskip

Hamilton proves that the value of the action integral expressing $V(M_1,M_2)$ is
\emph{stationary} with respect to infinitesimal variations of the path on which that integral is calculated,
the end points $M_1$ and $M_2$ remaining fixed. This very important result relates Optics with the
calculus of variations, in agreement with the Principle stated in 1657 by Pierre de Fermat (1601--1665).
As well as in Optics, the characteristic function is used by Hamilton as a key concept in his famous
works on Dynamics \cite{Hamilton1834, Hamilton1835}.

\section{A symplectic proof of the Malus-Dupin theorem}\label{DemonstrationSymplectique}
It is proven in this section that the set $\mathcal L$ of oriented straight lines in an affine
Euclidean three-dimensional space $\mathcal E$ is endowed with a very naturally defined symplectic
form $\omega_{\mathcal L}$ (\ref{StructureVarieteDesRayons}) and that rectangular families of
oriented straight lines are immersed Lagrangian submanifolds of $({\mathcal L},\omega_{\mathcal L})$
(\ref{RectangulaireLagrangienne}). Then it is proven that any reflection on a smooth surface is 
a symplectomorphism of an open subset of $({\mathcal L},\omega_{\mathcal L})$ 
onto another open subset of that symplectic manifold (\ref{reflexionsymplectique}).
Similarly it is proven that any refraction through a smooth surface which separates
two transparent media with refractive indices $n_1$ and $n_2$ is a symplectomorphism
of an open subset of $({\mathcal L},n_1\omega_{\mathcal L})$ onto an open subset of
$({\mathcal L},n_2\omega_{\mathcal L})$. The Malus-Dupin theorem 
is an easy consequence of these results.

\begin{prop}\label{StructureVarieteDesRayons}
Let $\mathcal E$ be an affine Euclidean space of dimension $3$ and $\mathcal L$ 
be the set of oriented straight lines in $\mathcal E$.  The choice of a point $O\in{\mathcal E}$
determines a one to one map of $\mathcal L$ onto the cotangent bundle $T^*\Sigma$ to a sphere 
$\Sigma$ of dimension $2$. The pull-backs by this map of the topology, the differential 
manifold structure and the affine bundle structure of $T^*\Sigma$ endow $\mathcal L$ 
with a topology, a differential manifold structure and an affine bundle structure 
which do not depend on the choice of $O$. Moreover the pull-back of the exterior differential 
$\d\theta_\Sigma$ of the Liouville form $\theta_\Sigma$ on $T^*\Sigma$ is a symplectic form
$\omega_{\mathcal L}$ on $\mathcal L$ which does not depend on the choice of $O$.
\end{prop}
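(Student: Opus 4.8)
The plan is to construct, for each choice of origin $O\in\mathcal E$, an explicit bijection $\Phi_O\colon\mathcal L\to T^*\Sigma$, to transport all the structures through it, and then to show that changing the origin amounts to a fibrewise translation by an \emph{exact} $1$-form on $\Sigma$, which is precisely what leaves the claimed structures invariant. First I would take $\Sigma$ to be the unit sphere in the vector space $\vect{\mathcal E}$ of translations of $\mathcal E$, so that $T_u\Sigma=u^\perp$ and, using the Euclidean metric, $T^*_u\Sigma\cong u^\perp$. To an oriented line $L\in\mathcal L$ I associate its unit directing vector $u\in\Sigma$ and the foot $P$ of the perpendicular dropped from $O$ onto $L$; since $\vect{OP}\perp u$, the pair $\bigl(u,\vect{OP}\bigr)$ is a point of $T^*\Sigma$, and I set $\Phi_O(L)=\bigl(u,\vect{OP}\bigr)$. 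That $\Phi_O$ is a bijection is immediate, its inverse sending $(u,\xi)$ to the oriented line of direction $u$ through the point $O+\xi$ (with $\xi$ regarded as a vector via the metric). By definition $\mathcal L$ receives, through $\Phi_O$, the topology, the smooth structure and the affine bundle structure of $T^*\Sigma$.

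The heart of the argument is the change-of-origin computation. Let $O'$ be a second origin and $\vect{a}=\vect{OO'}$. For a line $L$ of direction $u$, writing $P$, $P'$ for the feet of the perpendiculars from $O$, $O'$ and using $\vect{P P'}=s\,u$, the two orthogonality conditions give $s=\vect{a}\cdot u$ and hence
\begin{equation*}
\vect{O'P'}=\vect{OP}-\bigl(\vect{a}-(\vect{a}\cdot u)\,u\bigr)\,.
\end{equation*}
Thus $\Phi_{O'}\circ\Phi_O^{-1}$ is the fibrewise translation $(u,\xi)\mapsto(u,\xi-\alpha_u)$, where $\alpha_u$ is the orthogonal projection of $\vect{a}$ onto $u^\perp$. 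I would then observe that $\alpha$ is the differential of the smooth function $f(u)=\vect{a}\cdot u$ on $\Sigma$: for $v\in u^\perp$ one has $\d f_u(v)=\vect{a}\cdot v=\alpha_u\cdot v$. In particular $\alpha=\d f$ is an exact, hence closed, $1$-form on $\Sigma$.

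It remains to read off the three conclusions. The transition map $\Phi_{O'}\circ\Phi_O^{-1}$ is a fibrewise translation by a smooth section, hence a diffeomorphism of $T^*\Sigma$ respecting its affine bundle structure; this shows that the topology, the smooth structure and the affine bundle structure transported to $\mathcal L$ do not depend on $O$. For the symplectic form I would invoke the standard behaviour of the Liouville form under a fibre translation $\tau_\beta\colon(u,\xi)\mapsto(u,\xi+\beta_u)$ by a $1$-form $\beta$, namely $\tau_\beta^*\theta_\Sigma=\theta_\Sigma+\pi^*\beta$, where $\pi\colon T^*\Sigma\to\Sigma$ is the projection; this is immediate from $\pi\circ\tau_\beta=\pi$ and the definition of $\theta_\Sigma$. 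Taking $\beta=-\alpha=-\d f$ and applying the exterior differential gives $\tau_\beta^*\,\d\theta_\Sigma=\d\theta_\Sigma+\pi^*\d\beta=\d\theta_\Sigma$, since $\beta$ is closed. Therefore $\Phi_O^*\,\d\theta_\Sigma=\Phi_{O'}^*\,\d\theta_\Sigma$, so that $\omega_{\mathcal L}:=\Phi_O^*\,\d\theta_\Sigma$ is well defined independently of $O$, and it is symplectic because $\d\theta_\Sigma$ is and $\Phi_O$ is a diffeomorphism. The step I expect to demand the most care is this change-of-origin computation together with the recognition that the resulting fibre translation is by an \emph{exact} $1$-form --- it is exactly this exactness, and not merely smoothness, that is needed to preserve $\d\theta_\Sigma$.
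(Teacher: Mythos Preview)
Your proof is correct and follows essentially the same route as the paper's: construct $\Phi_O$, show that changing the origin amounts to a fibre translation of $T^*\Sigma$ by the differential of the height function $u\mapsto \vect a\cdot u$, and conclude via $\tau_\beta^*\theta_\Sigma=\theta_\Sigma+\pi^*\beta$. The only cosmetic difference is that the paper defines $\eta_L(\vect w)=\vect{OP}\cdot\vect w$ for \emph{any} point $P\in L$ (the covector being independent of that choice), whereas you single out the foot of the perpendicular from $O$; the two constructions give the same map once $T^*_u\Sigma$ is identified with $u^\perp$. One small remark on your closing sentence: what is actually needed to preserve $\d\theta_\Sigma$ is that the translating $1$-form be \emph{closed}, not exact---you use this correctly in the computation, so the proof stands.
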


\begin{proof}
Let $\Sigma$ be a sphere of any radius $R$, for example $R=1$, centered on a point 
$C\in{\mathcal E}$, and let $O$ be another point in $\mathcal E$.
\begin{figure}
\begin{center}
  \includegraphics{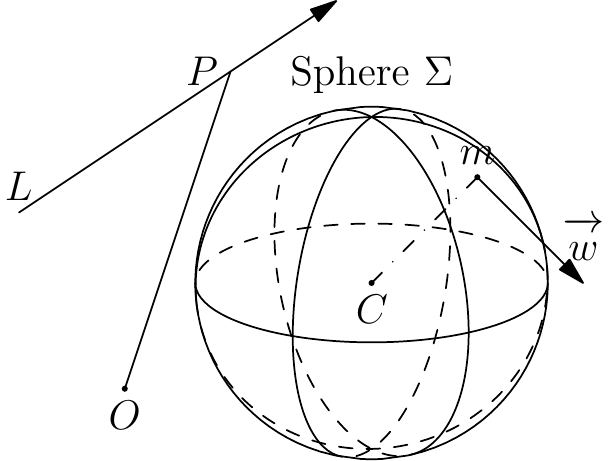}\\
  \caption{The space of oriented straight lines and the cotangent bundle to a sphere}
  \end{center}
\end{figure}
For each oriented straight line 
$L\in{\mathcal L}$, let $m_L\in\Sigma$ be the point such that
$\vect u_L=\vect{Cm_L}$ 
is a directing vector of $L$, and
$\eta_L$ be the linear form on the tangent space
$T_{m_L}\Sigma$ given by 
 $$\eta_L(\vect w)=\vect{OP}\cdot\vect w\,,\quad \vect w\in T_m\Sigma\,,
 $$
where $P$ is any point on $L$ and where the dot $\cdot$ stands for the scalar product of vectors.
(Figure 3).
Clearly the map $\Phi_O:L\mapsto\eta_L$ so defined is one to one from $\mathcal L$ onto $T^*\Sigma$.
That map does not depend on the choice of the centre $C$ of $\Sigma$ (if two spheres of the same radius
are identified by means of the translation which transports the centre of one sphere 
onto the centre of the other sphere). When $O$ is replaced by another point $O'\in{\mathcal E}$,
the covector $\eta_L\in T^*_{m_L}\Sigma$ is replaced by
$\eta'_L\in T^*_m\Sigma$, given by
$\eta'_L(\vect w)=\eta_L(\vect w)+\vect{O'O}\cdot{\vect w}$, where $\vect w$ is any vector in 
$T_{m_L}\Sigma$. The covector $\eta'$ can be expressed as
$\eta'_L=\eta_L+\d f_{O'O}(m_L)$,  where $f_{O'O}:\Sigma\to\RR$ is the smooth function
$f_{O'O}(n)=\vect{O'O}\cdot\vect{Cn}$, with $n\in\Sigma$.
The one to one maps $\Phi_{O'}$ and $\Phi_O$ therefore are related by
$\Phi_{O'}=\Psi_{O'O}\circ\Phi_O$, where
$\Psi_{O'O}:T^*\Sigma\to T^*\Sigma$ is the diffeomorphism 
$\eta\mapsto \eta+\d f_{O'O}\bigl(\pi_\Sigma(\eta)\bigr)$,
$\pi_\Sigma:T^*\Sigma\to\Sigma$ being the canonical projection.
\par\smallskip

The identification of $\mathcal L$ with $T^*\Sigma$ by means of the map $\Phi_0$ allows the transfer
on $\mathcal L$ of the mathematical structure of $T^*\Sigma$. So the set $\mathcal L$ 
becomes endowed with a topology, a differentiable manifold strucure, a vector bundle structure
and a $1$-form $\Phi_O^*\theta_\Sigma$, pull-back of the Liouville form $\theta_\Sigma$ on $T^*\Sigma$.
When $O$ is replaced by $O'$ the topology, the differentiable manifold structure and the affine bundle structure of $\mathcal L$ remain unchanged, while its vector bundle structure is modified; the 
$1$-form $\Phi_O^*\theta_\Sigma$ is replaced by 
 $$\Phi_{O'}^*\theta_\Sigma=\Phi_O^*(\Psi_{O'O}^*\theta_\Sigma)
                           =\Phi_O^*\bigl(\theta_\Sigma+\pi_\Sigma^*(\d f_{O'O})\bigr)\,.
 $$
Therefore $\Phi_{O'}^*(\d\theta_\Sigma)=\Phi_O^*(\d\theta_\Sigma)$ is a symplecic form 
$\omega_{\mathcal L}$ on $\mathcal L$ which does not depend on the choice of $O$.
\end{proof}

\begin{remark}\label{DroitesPointees}
Let $\widehat{\mathcal L}$ be the set of \emph{pointed oriented straight lines in} 
$\mathcal E$, \emph{i.e.} the set of pairs $(L,P)$ where $L\in{\mathcal L}$ is an oriented straight line
and $P$ is a point of $L$. This set is a smooth manifold of dimension $5$ which projects onto the manifold
$\mathcal L$ of dimension $4$, the projection $(L,P)\mapsto L$ amounting to \lq\lq forget\rq\rq\ 
the point $P$. Let $O$ be any fixed point in $\mathcal E$. An element $(L,P)$ in 
$\widehat{\mathcal L}$ can be represented by the pair of vectors
$(\vect{OP},\vect u)$ made by $\vect{OP}$ and the unitary directing vector 
$\vect u$ of the oriented line $L$. There 
exists\footnote{The choice of a point $O\in{\mathcal E}$ 
allows the identification of $\widehat{\mathcal L}$ with the restriction $T^*_\Sigma{\mathcal E}$
to the sphere $\Sigma$ of the cotangent bundle $T^*{\mathcal E}$. 
With this identification 
$\omega_{\widehat{\mathcal L}}$ is the form induced on $T^*_\Sigma{\mathcal E}$ by the canonical
symplectic form $\d\lambda_{\mathcal E}$ of the cotangent bundle $T^*{\mathcal E}$.}  
on $\widehat L$ an exact  differential $2$-form $\omega_{\widehat{\mathcal L}}$ given by
 $$\omega_{\widehat{\mathcal L}}(L,P)=\sum_{i=1}^3\d p_i\wedge\d u_i=\d\,\vect{OP}\wedge\d{\vect u}\,,
 $$
where $(p_1,p_2,p_3)$ and $(u_1,u_2,u_3)$ are the components of $\vect{OP}$ and
$\vect u$ in an orthonormal basis. The symbol $\wedge$ in the right hand side
is a combination of scalar and exterior products.
The form $\omega_{\widehat{\mathcal L}}$ projects onto $\mathcal L$ and its projection
is the symplectic form $\omega_{\mathcal L}$, which therefore can be expressed as
 $$\omega_{\mathcal L}(L)=\d\,{\vect{OP}}\wedge\d{\vect u}\,,\quad L\in{\mathcal L}\,,
 $$
since the right hand side does not depend on the choice of the point $P$ 
on the oriented straight line $L$, nor on that of the point $O$ in $\mathcal E$. 
This very convenient expression of
$\omega_{\mathcal L}$ is particularly well suited when used in conjunction with the usual
vector calculus in a three-dimensional Euclidean vector space. 
\end{remark}

\begin{prop}\label{RectangulaireLagrangienne} 
A two-parameters family of rays is rectangular in the sense of 
\ref{NormalFamily} if and only if it is an immersed Lagrangian submanifold
of the symplectic manifold $({\mathcal L},\omega_{\mathcal L})$ of oriented
straight lines in the affine Euclidean space $\mathcal E$.
\end{prop}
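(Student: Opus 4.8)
The plan is to reduce the statement to the vanishing of the pullback of $\omega_{\mathcal L}$ to $\mathcal F$, and then to recognise that vanishing, through a Poincaré-lemma argument, as the possibility of sliding the chosen points along the rays onto a common orthogonal surface. Since $\mathcal F$ has dimension $2=\frac12\dim{\mathcal L}$, it is an immersed Lagrangian submanifold precisely when the pullback of $\omega_{\mathcal L}$ by the immersion vanishes identically; hence the whole question is local, and I would treat it in a coordinate patch $k=(k_1,k_2)$ on $\mathcal F$ around a fixed ray $L_0$.

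First I would choose, on each ray $L(k)$ near $L_0$, a point $M(k)$ depending smoothly on $k$ and regular on its ray, such points existing by the convention recalled in Comments \ref{Commentaires}. Writing $\vect{OM}$ for $\vect{OM(k)}$ with orthonormal components $(M_1,M_2,M_3)$, and using the expression $\omega_{\mathcal L}=\d\vect{OP}\wedge\d\vect u$ of Remark \ref{DroitesPointees} (which is independent of the chosen point on each line), I would introduce the $1$-form $\beta=\vect u\cdot\d\vect{OM}=\sum_i u_i\,\d M_i$ on the patch and compute $\d\beta=\sum_i\d u_i\wedge\d M_i=-\,\d\vect{OM}\wedge\d\vect u$, so that $\d\beta=-\,\omega_{\mathcal L}|_{\mathcal F}$. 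Thus $\mathcal F$ is Lagrangian if and only if $\beta$ is closed.

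The geometric content of $\beta$ is the decisive link. An infinitesimal displacement $\d\vect{OM}$ of $M(k)$ is tangent to the surface $\Sigma=\{M(k)\}$ it sweeps out, and $\Sigma$ is crossed orthogonally by every nearby ray $L(k)$ exactly when $\vect u\cdot\d\vect{OM}=0$, that is, exactly when $\beta=0$. The freedom in the construction is the freedom to slide $M(k)$ along its ray: replacing $M(k)$ by $M(k)+s(k)\,\vect u(k)$ and using $\Vert\vect u\Vert=1$ together with $\vect u\cdot\d\vect u=0$, one finds $\beta\mapsto\beta+\d s$. Hence the de Rham class of $\beta$ on the patch does not depend on the section $M$, and one can arrange $\beta=0$ by a suitable choice of $s$ if and only if $\beta$ is exact; on a disc-like patch, exactness is equivalent to closedness by the Poincaré lemma.

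Putting these facts together yields both implications. If $\mathcal F$ is rectangular, then near any regular point there is a surface orthogonal to all nearby rays; taking $M(k)$ on it makes $\beta\equiv 0$, whence $\omega_{\mathcal L}|_{\mathcal F}=-\d\beta=0$ on a neighbourhood of the corresponding ray, and since every ray carries a regular point this forces $\omega_{\mathcal L}|_{\mathcal F}=0$ throughout, i.e. $\mathcal F$ is Lagrangian. Conversely, if $\mathcal F$ is Lagrangian, then $\beta$ is closed, hence locally exact, so sliding produces a section with $\beta=0$ whose image is orthogonally crossed by all nearby rays, in the sense of Definition \ref{NormalFamily}; as this can be done around every regular point, $\mathcal F$ is rectangular. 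I expect the only delicate points to be the gauge invariance $\beta\mapsto\beta+\d s$ and the passage from closed to exact, which is precisely why I would keep the argument strictly local and invoke the existence of a regular point on each ray to globalise the conclusion.
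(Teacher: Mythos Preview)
Your proposal is correct and follows essentially the same approach as the paper: both introduce the $1$-form $\beta=\vect u\cdot\d\vect{OM}$ on a coordinate patch, show $\d\beta=-\omega_{\mathcal L}|_{\mathcal F}$, and use the Poincar\'e lemma to equate closedness of $\beta$ with the possibility of choosing the points $M(k)$ on an orthogonal surface. Your packaging in terms of the gauge freedom $\beta\mapsto\beta+\d s$ under sliding $M\mapsto M+s\,\vect u$ is a clean way to phrase what the paper does explicitly by writing the new section as $\vect P(k)-\bigl(F(k)+c\bigr)\vect u(k)$; the only point to make slightly more explicit is that the additive constant in $s$ is used to place the slid point at a prescribed regular point, so that the image is indeed a smooth surface.
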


\begin{proof}
Each element $L_0$ in a two-parameters family  $\mathcal F$ of oriented straight lines
has an open neighbourhood in $\mathcal F$ which is the image of an injective smooth map
$L:(k_1,k_2)\mapsto L(k_1,k_2)$, defined on an open subset of $\RR^2$ containing $(0,0)$, 
with values in $\mathcal L$, such that $L(0,0)=L_0$. For each $k=(k_1,k_2)$ in the open subset of
$\RR^2$ on which the map $L$ is defined, let
$\vect u(k)$ be the unitary directing vector of $L(k)$ and $P(k)$ a point of $L(k)$.
The points $P(k)$ are not uniquely determined but it is always possible to choose them to make
smooth the map $k\mapsto\bigl(P(k),\vect u(k)\bigr)$. 
The remark \ref{DroitesPointees} allows us to write
 \begin{align*}
   L^*\omega_{\mathcal L}&=\d\bigl(\vect P(k)\cdot\,\d\vect u(k)\bigr)
                        =\d\Bigl(\d\bigl(\vect P(k)\cdot\vect u(k)\bigr)-\vect u(k)\cdot\,\d\,
    \vect P(k)\Bigr)\\
                     &=-\d\bigl(\vect u(k)\cdot\,\d\,\vect P(k)\bigr)\,,
 \end{align*}
where $\vect P(k)$ stands for the vector $\vect{OP}(k)$, $O$ being any fixed
point in $\mathcal E$.  
The immersed submanifold $\mathcal F$ is Lagrangian in a neighbourhood of
$L_0$ if and only if $L^*\omega_{\mathcal L}=0$
(\cite{LibermannMarle} p. 92, or \cite{OrtegaRatiu} p. 123), in other words 
if and only if the differential one-form  
$\vect u(k)\cdot \d\,\vect P(k)$ is closed.
Poincaré's lemma (\cite{Sternberg}, theorem 4.1 page 121) asserts that a one-form is closed if and only if
it is locally the differential of a smooth function. The immersed submanifold 
$\mathcal F$ therefore is Lagrangian near $L_0$ if and only if there exists a smooth
function $k\mapsto F(k)$, defined on a neighbourhood of $(0,0)$, such that 
 $$\vect u(k)\cdot\d\, \vect P(k)=\d F(k)\,.\eqno(*)
 $$
The vector $\vect u(k)$ being unitary, for any constant $c\in\RR$ we have, 
 $$\d F(k)=\vect u(k)\cdot\d\Bigl(\bigl(F(k)+c\bigr)\vect u(k)\Bigr)\,.
 $$
If a smooth function
$F$ satisfying $(*)$ exists, it also satisfies, for any constant $c\in\RR$,
 $$\vect u(k)\cdot\d\Bigl(\vect P(k)-\bigl(F(k)+c\bigr)\vect u(k)\Bigr)=0\,.\eqno(**)
 $$
Let us assume that $\mathcal F$ is Lagrangian in a neighbourhood of $L_0$, and let $F$
be a smooth function, defined in a neighbourhood of $(0,0)$, which satisfies $(*)$. 
Let $Q_0$ be a regular point on $L_0$. There exists a constant $c\in\RR$ such that
$\vect P(0,0)-\bigl(F(0,0)+c\bigr)\vect u(0,0)=\vect Q_0$, where $\vect Q_0$ stands for
the vector $\vect{\mathstrut OQ_0}$.
Since the points near enough $Q_0$ are regular on the rays near enough $L_0$ which cross them,
the variations of $\vect P(k)-\bigl(F(k)+c\bigr)\vect u(k)$ when $k$ varies around $(0,0)$ 
generate a smooth surface containing
$Q_0$ which, as shown by the equality $(**)$, is orthogonally crossed by the oriented straight lines
$L(k)$ for all $k$ near enough $(0,0)$. The family $\mathcal F$ therefore is rectangular
near $L_0$.
\par\smallskip

Conversely, let us assume that $\mathcal F$ is rectangular near $L_0$. As was tacitly done by Hamilton,
I assume that there exists a regular point on $L_0$. There
exists a smooth surface containing this point crossed orthogonally by $L_0$ and by the oriented 
straight lines $L(k)$ for all $k$ near enough $(0,0)$. This surface is made by the points
$P(k)-F(k)\vect u(k)$, when $k$ varies aroud $(0,0)$, $F$ being a smooth function.
The function $F$ satisfies equality $(*)$, therefore $\mathcal F$ is Lagrangian near $L_0$.
\end{proof}

\begin{prop}\label{reflexionsymplectique} 
Let $M\subset{\mathcal E}$ be a smooth open reflectig surface in the Euclidean three-dimensional
affine space $\mathcal E$. The set $U$ of oriented straight lines which meet $M$ 
transversally on its reflecting side is an open subset of the symplectic manifold 
$({\mathcal L},\omega_{\mathcal L})$ of all
oriented straight lines. The map which associates to each element in $U$ which bears a light ray
the oriented straight line which bears the corresponding reflected ray 
is a symplectomorphism of $U$ onto the open subset of $\mathcal L$
made by the straight lines in $U$ with the opposite orientation.
\end{prop}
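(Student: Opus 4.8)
The plan is to argue locally and to lean on the coordinate-free expression $\omega_{\mathcal L}(L)=\d\,\vect{OP}\wedge\d\,\vect u$ of Remark \ref{DroitesPointees}, whose decisive feature is that the base point $P$ used to represent a line may be chosen \emph{anywhere} on that line. First I would dispose of the topological claim. A line $L_0\in U$ meets $M$ transversally at a point $P_0$ on its reflecting side, which means precisely that its unit directing vector satisfies $\vect u\cdot\vect n(P_0)\neq 0$. The implicit function theorem then produces, for every oriented line $L$ close enough to $L_0$, a nearby transversal intersection point with $M$ depending smoothly on $L$, still lying on the reflecting side and with $\vect u\cdot\vect n$ of the same sign; hence $U$ is open and the hit-point map $L\mapsto P$ is smooth on $U$. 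Writing $r$ for the reflection map, the law of reflection expresses the reflected direction as $\vect{u_2}=\vect{u_1}-2(\vect{u_1}\cdot\vect n)\,\vect n$, a smooth function of $\vect{u_1}$ and of $\vect n(P)$, so $r$ is smooth on $U$.

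For the symplectic statement I would take any local parametrization $k=(k_1,k_2)\mapsto L_1(k)$ of an open piece of $U$, denote by $\vect{u_1}(k)$ the directing vector, by $P(k)\in M$ the hit point and by $\vect{u_2}(k)$ the reflected direction, and then use the \emph{same} point $P(k)$ as base point on both $L_1(k)$ and on the reflected line $L_2(k)$, which is legitimate since $P(k)$ lies on both. The formula of Remark \ref{DroitesPointees} gives, for the pullbacks of $\omega_{\mathcal L}$ by the incident and by the reflected families,
$$
\omega_{\mathcal L}|_{U}=\d\,\vect P\wedge\d\,\vect{u_1}\,,\qquad
r^*\omega_{\mathcal L}=\d\,\vect P\wedge\d\,\vect{u_2}\,,
$$
where $\vect P$ stands for $\vect{OP}$. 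Proving that $r$ is symplectic therefore reduces to the single identity $\d\,\vect P\wedge\d(\vect{u_1}-\vect{u_2})=0$.

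This last identity is the heart of the matter, and I expect it to be the only real content of the proof. The reflection law tells us that $\vect{u_1}-\vect{u_2}$ is parallel to the normal, say $\vect{u_1}-\vect{u_2}=\lambda\,\vect n$ with $\lambda=2\,\vect{u_1}\cdot\vect n$ a smooth function of $k$. Expanding,
$$
\d\,\vect P\wedge\d(\lambda\vect n)=(\vect n\cdot\d\,\vect P)\wedge\d\lambda+\lambda\,(\d\,\vect P\wedge\d\,\vect n)\,.
$$
Both terms vanish for one and the same reason, namely that $P(k)$ stays on the surface $M$: each $\partial_{k_j}\vect P$ is tangent to $M$, so the scalar $1$-form $\vect n\cdot\d\,\vect P$ is identically zero. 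This kills the first term outright; for the second, taking the exterior differential of the identity $\vect n\cdot\d\,\vect P=0$ yields $\d\,\vect n\wedge\d\,\vect P=0$, that is $\d\,\vect P\wedge\d\,\vect n=0$. (Equivalently, $\d\,\vect P\wedge\d\,\vect n$ records the antisymmetric part of the Weingarten map along $P(k)$, which vanishes because the second fundamental form of $M$ is symmetric.) Hence $\d\,\vect P\wedge\d(\vect{u_1}-\vect{u_2})=0$, and the two pullbacks coincide.

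It remains only to assemble these facts. The image of $U$ under $r$ consists of the reflected lines $L_2$; reversing the orientation of such an $L_2$ produces a line hitting $M$ transversally on its reflecting side, so the image is exactly the set of lines of $U$ carrying the opposite orientation, and this set is open because orientation reversal is a smooth involution of $\mathcal L$. Thus $r$ is a smooth bijection of $U$ onto an open set; since it preserves the nondegenerate form $\omega_{\mathcal L}$, its differential is everywhere injective, hence an isomorphism between the equidimensional tangent spaces, so $r$ is a local diffeomorphism and therefore a diffeomorphism, \emph{i.e.} a symplectomorphism. Combined with Proposition \ref{RectangulaireLagrangienne}, which identifies rectangular families with immersed Lagrangian submanifolds, this immediately yields the Malus--Dupin theorem for a reflection, Lagrangian submanifolds being carried to Lagrangian submanifolds by a symplectomorphism.
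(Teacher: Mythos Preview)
Your argument is correct and follows essentially the same line as the paper: both exploit Remark \ref{DroitesPointees} by choosing the hit point $P\in M$ as the base point on \emph{both} lines, reducing everything to the identity $\d\vect P\wedge\d(\vect{u_2}-\vect{u_1})=0$, which is then read off from $\vect n\cdot\d\vect P=0$ (the paper packages your two terms into the single exact expression $-2\,\d\bigl((\vect{u_1}\cdot\vect n)(\vect n\cdot\d\vect P)\bigr)$, but the content is identical). One small slip: you introduce a two-dimensional parameter $k=(k_1,k_2)$ to ``parametrize an open piece of $U$'', yet $U$ is four-dimensional; the computation should be carried out directly on $U$ (as the paper does) or with a four-dimensional chart, though your formulas remain valid verbatim once this is corrected.
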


\begin{proof}
Being determined by strict inequalities, $U$ is an open subset of $\mathcal L$. Let 
$L_1$ be a variable element in $U$, $P$ the point at which $L_1$ meets the reflecting surface
$M$, $L_2$ the oriented straight line which bears the corresponding reflected ray, 
${\vect u}_1$ and ${\vect u}_2$ the unitary directing vectors of $L_1$ and $L_2$, respectively
(figure 1).
As in Section \ref{ExpressionLoisReflexion}, $\vect P$ stands for the vector
$\vect{OP}$, $O\in{\mathcal E}$ being a fixed point taken as origin.
The expression of the symplectic form $\omega_{\mathcal L}$ given in Remark \ref{DroitesPointees}
shows that it is enough to prove the equality
$\d \vect P\wedge\d {\vect u}_2=\d\vect P\wedge \d{\vect u}_1$. 
The laws of reflection shows that ${\vect u}_2-{\vect u}_1=2({\vect u}_1\cdot{\vect n}){\vect n}$. Therefore
 \begin{align*}
 \d\vect P\wedge\d({\vect u}_2-{\vect u}_1)
 &=2\d\vect P\wedge\d\bigl((\vect u_1\cdot\vect n)\vect n\bigr)\\
 &=-2\d\bigl((\vect u_1\cdot\vect n)(\vect n\cdot\d\vect P)\bigr)\\
 &=0\,,
 \end{align*}
since $\vect n\cdot \d\vect P=0$, the vectors $\d\vect P$ and $\vect n$ being, respectively,
tangent to and orthogonal to the surface $M$ at $P$. 
\end{proof}

\begin{prop}\label{refractionsymplectique}
Let $R\subset{\mathcal E}$ be a smooth open refracting surface which separates two transparent media
with refractive indices $n_1$ and $n_2\neq n_1$, respectively.
The set $U$ of oriented straight lines which meet transversally the surface $R$ on the side of the medium
with refractive index $n_1$ under an angle such that the laws of refraction allow the existence, in 
the medium with refractive index $n_2$, of a refraced ray transverse to $R$, is an open subset $U$ of
the symplectic manifold $({\mathcal L},\omega_{\mathcal L})$ of all oriented straight lines. 
The map which associates
to each element in $U$ which bears a light ray
the oriented straight line which bears the corresponding refracted ray 
is a symplectomorphism of $U$ endowed with the symplectic form $n_1\omega_{\mathcal L}$  onto another
open subset of $\mathcal L$ endowed with the symplectic form $n_2\omega_{\mathcal L}$.
\end{prop}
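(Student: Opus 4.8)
The plan is to follow closely the pattern of the reflection proof in Proposition \ref{reflexionsymplectique}, adapting it to account for the two refractive indices and for the fact that the source and the target carry \emph{different} symplectic forms. First I would check that $U$ is open: the conditions defining it -- transversal incidence on the $n_1$-side of $R$, together with an angle of incidence for which the laws of refraction admit a refracted ray transverse to $R$ -- are all strict inequalities, so $U$ is an open subset of $\mathcal L$. The refraction map $\Refrac:U\to\mathcal L$, sending $L_1$ to the oriented straight line $L_2$ which bears the refracted ray, is smooth by the smoothness of $R$ and of the Snell-Descartes law, and it is a diffeomorphism onto its image $V$: its inverse is the refraction across $R$ in the reverse sense, exchanging the roles of $n_1$ and $n_2$, which is defined and smooth precisely on $V$. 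In particular $V$ is open.

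It then remains to prove that $\Refrac$ pulls back $n_2\omega_{\mathcal L}$ to $n_1\omega_{\mathcal L}$. I would use the expression $\omega_{\mathcal L}=\d\vect P\wedge\d{\vect u}$ of Remark \ref{DroitesPointees} and, crucially, choose for both $L_1$ and $L_2$ the \emph{same} base point $P$, namely the point at which $L_1$ meets $R$ (which lies on $L_2$ as well); this is legitimate since $\omega_{\mathcal L}(L)$ does not depend on the point chosen on $L$. With this choice, and writing ${\vect u}_1,{\vect u}_2$ for the unitary directing vectors of $L_1,L_2$, the desired equality $\Refrac^*(n_2\omega_{\mathcal L})=n_1\omega_{\mathcal L}$ reduces to
$$\d\vect P\wedge\d\bigl(n_1{\vect u}_1-n_2{\vect u}_2\bigr)=0\,.$$

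The Snell-Descartes law, in the vector form already recorded in Section \ref{ExpressionLoisRefraction} as $\bigl(n_1{\vect u}_1-n_2{\vect u}_2\bigr)\cdot\d\vect P=0$, says exactly that $n_1{\vect u}_1-n_2{\vect u}_2$ has vanishing tangential component along $R$, so that $n_1{\vect u}_1-n_2{\vect u}_2=\lambda\,\vect n$ for a smooth scalar function $\lambda$. Taking $f=\lambda$ and repeating the Leibniz manipulation used for the reflection -- that is, $\d\vect P\wedge\d(f\vect n)=-\d\bigl(f\,(\vect n\cdot\d\vect P)\bigr)$ -- and using $\vect n\cdot\d\vect P=0$, since $\d\vect P$ is tangent to $R$ while $\vect n$ is orthogonal to it, one obtains $\d\vect P\wedge\d(n_1{\vect u}_1-n_2{\vect u}_2)=0$, as required.

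The only genuinely new feature, and the point I expect to demand the most care, is the bookkeeping of the two indices: the source carries $n_1\omega_{\mathcal L}$ and the target $n_2\omega_{\mathcal L}$, so the computation must be arranged to annihilate the combination $n_1{\vect u}_1-n_2{\vect u}_2$ rather than ${\vect u}_1-{\vect u}_2$; the reflection case is recovered formally by setting $n_1=n_2$ (the reflection law being the refraction law with equal indices and reversed orientation). The diffeomorphism claim, by contrast, is routine once the reversibility of refraction is invoked.
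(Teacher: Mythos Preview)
Your proposal is correct and follows essentially the same route as the paper: both use the expression $\omega_{\mathcal L}=\d\vect P\wedge\d\vect u$ from Remark~\ref{DroitesPointees} with the common point $P\in R$, invoke the Snell--Descartes law to write $n_1\vect u_1-n_2\vect u_2$ as a scalar multiple of $\vect n$, and then apply the identity $\d\vect P\wedge\d(f\vect n)=-\d\bigl(f\,(\vect n\cdot\d\vect P)\bigr)$ together with $\vect n\cdot\d\vect P=0$. The paper merely makes the scalar explicit, $\lambda=n_1(\vect u_1\cdot\vect n)-n_2(\vect u_2\cdot\vect n)$, and spells out the total-internal-reflection inequality $\sin\alpha_1<n_2/n_1$ that makes $U$ open when $n_1>n_2$; conversely, your explicit remark that the inverse of $\Refrac$ is refraction in the reverse sense is a point the paper leaves implicit.
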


\begin{proof}
The notations being the same as those in the proof of \ref{reflexionsymplectique}, let 
${\vect v}_1$ be the orthogonal projection of ${\vect u}_1$ onto the tangent plane to the surface 
$R$ at $P$. The Snell-Descartes' laws ofa refraction indicate that if a refracted ray $L_2$ corresponding to
$L_1$ exists, the orthogonal projection ${\vect v}_2$ of its unitary directing vector
${\vect u}_2$ onto the tangent plane to the surface 
$R$ at $P$ satisfies the equality
 $$n_2{\vect v}_2=n_1{\vect v}_1\,,\quad\hbox{or}
    \quad n_2\bigl({\vect u}_2-(\vect u_2\cdot\vect n)\vect n\bigr)
   =n_1\bigl({\vect u}_1-(\vect u_1\cdot\vect n)\vect n\bigr)\,.
 $$
When it can be satisfied that equality determines ${\vect u}_2$, therefore determines the
oriented straight $L_2$ which bears the refracted ray.
When  $n_1\leq n_2$ that equality can always be satisfied, but when $n_1>n_2$  it can be satisfied by a
straight line $L_2$ transverse to $R$ if and only if 
$({n_1}/{n_2})\Vert{\vect v}_1\Vert<1$, \emph{i.e.} if and only if the angle $\alpha_1$ made by $L_1$ with
the vector orthogonal to $R$ at $P$ satisfies $\displaystyle\sin\alpha_1<{n_2}/{n_1}$\footnote{
If that inequality is not satisfied the light ray supported by $L_1$ is totally reflected.}.
That inequality being strict, $U$ is an open subset of $\mathcal L$.
\par\smallskip

As in \ref{reflexionsymplectique} it is enough to prove that
$n_2\d \vect P\wedge\d {\vect u}_2=n_1\d\vect P\wedge \d{\vect u}_1$. 
We have
 \begin{align*}
 \d\vect P\wedge(n_2\d{\vect u}_2-n_1\d{\vect u}_1)
 &=\d\vect P\wedge\d\bigl(n_2(\vect u_2\cdot\vect n)\vect n
                         -n_1(\vect u_1\cdot\vect n)\vect n\bigr)\\
 &=-\d\Bigl(\bigl(n_2(\vect u_2\cdot\vect n)
                 -n_1(\vect u_1\cdot\vect n)\bigr)(\vect n\cdot\d\vect P)\Bigr)\\
 &=0\,,
 \end{align*}
since $\vect n\cdot\d\vect P=0$, the vectors $\d\vect P$ and $\vect n$ being, respectively,
tangent to and orthogonal to the surface
$R$ at $P$.
\end{proof}

\section{Conclusion}\label{Conclusion}

Reflections on and refractions across a smooth surface being symplectomorphisms, the propagation of light through an optical device made by several reflecting and refracting smooth surfaces
is a symplectomorphism (since by composition of several symplectomorphims one gets a symplectomorphism).
The image by a symplectomorphism of an immersed Lagrangian submanifold is another immersed
Lagrangian submanifold, which proves the Malus-Dupin theorem
\ref{TheoremeMalusDupin}.

\section{Acknowledgements}

I warmly thanks the organizers of the international conference \emph{Geometry of Jets and
Fields} for their kind invitation and their generous support during the conference, and I address
all my best wishes to Professor Janusz Grabowski for his birthday.

\end{document}